\DeclareMathOperator{\curl}{curl}
\DeclareMathOperator{\CURL}{CURL}
\DeclareMathOperator{\divg}{div}
\theoremstyle{plain}
\newtheorem{Thm}{Theorem}
\newtheorem{Prop}[Thm]{Proposition}
\newtheorem{Rem}[Thm]{Remark}
\newtheorem{Lem}[Thm]{Lemma}
\newtheorem{Cor}[Thm]{Corollary}
\newtheorem{Claim}[Thm]{Claim}
\numberwithin{equation}{section}
\numberwithin{Thm}{section}
\begin{document}
\large

\title[Fractional integral operator and its applications]
{Fractional integral operator for $L^1$ vector fields and its applications}

\author[]{Zhibing Zhang}

\address{Zhibing Zhang: School of Mathematics and Physics, Anhui University of Technology, Ma'anshan 243032, PR China; }

\email{zhibingzhang29@126.com}
\thanks{}

\keywords{Stein-Weiss inequality, fractional integral operator, $L^1$ vector fields, $\divg$-$\curl$ inequalities}

\subjclass[2010]{Primary 42B20; Secondary 35F35}

\begin{abstract}
This paper studies fractional integral operator for vector fields in weighted $L^1$. Using the estimates on fractional integral operator and Stein-Weiss inequalities, we can give a new proof for a class of Caffarelli-Kohn-Nirenberg inequalities and establish new $\divg$-$\curl$ inequalities for vector fields.
\end{abstract}
\maketitle

\section{Introduction}
For $0<\lambda<n$, the fractional integral operator $\mathrm{I}_{\lambda}$ is defined by $$\mathrm{I}_{\lambda}f(x)=(-\Delta)^{-\frac{\lambda}{2}}f(x)=C_{n,-\lambda}\int_{\mathbb{R}^n}\frac{f(y)}{|x-y|^{n-\lambda}}dy,$$
which is also called the Riesz potential. It is well-known that $\mathrm{I}_{\lambda}$ is a bounded linear operator from $L^p(\mathbb{R}^n)$ to $L^q(\mathbb{R}^n)$, where $1/q=1/p-\lambda/n$ and $1<p<n/\lambda$, which is so-called Hardy-Littlewood-Sobolev inequality. Stein and Weiss \cite{SteinW} established a doubly weighted generalization as follows.
\begin{Thm}[Stein-Weiss]\label{SW}
Let $0<\lambda<n$, $1<p<\infty$, $\alpha<\frac{n}{p'}$, $\beta<\frac{n}{q}$, $\alpha+\beta\geq 0$, and $\frac{1}{q}=\frac{1}{p}+\frac{\alpha+\beta-\lambda}{n}$. If $p\leq q<\infty$, then there exists a constant $C$, independent of $f$, such that
$$\left\||x|^{-\beta}\mathrm{I}_{\lambda}f\right\|_{L^q}\leq C\left\||x|^{\alpha}f \right\|_{L^p}.$$
\end{Thm}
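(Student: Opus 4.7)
The plan is to derive the Stein-Weiss inequality by decomposing the Riesz potential integral into three pieces adapted to the weight structure: two \emph{off-diagonal} pieces that reduce to weighted Hardy inequalities, and one \emph{diagonal} piece that reduces to the classical Hardy-Littlewood-Sobolev inequality (Theorem \ref{HLS}) on each dyadic annulus. Setting $g(y) = |y|^{\alpha} f(y)$, the goal becomes to bound the operator
\[
Tg(x) := |x|^{-\beta} \int_{\mathbb{R}^n} \frac{|y|^{-\alpha} g(y)}{|x-y|^{n-\lambda}} \, dy
\]
from $L^p(\mathbb{R}^n)$ to $L^q(\mathbb{R}^n)$.

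I would write $T = T_1 + T_2 + T_3$ corresponding to the integration regions $\{|y| \leq |x|/2\}$, $\{|x|/2 < |y| \leq 2|x|\}$, and $\{|y| > 2|x|\}$. On the first region one has $|x-y| \sim |x|$, and on the third $|x-y| \sim |y|$, so $T_1$ and $T_3$ reduce to integral operators with separable power-type kernels of the form $|x|^a |y|^b$ supported on these off-diagonal sets. After passing to polar coordinates, each becomes a one-dimensional Hardy operator with power weights; the hypothesis $\alpha < n/p'$ ensures local integrability of $|y|^{-\alpha p'}$ at the origin (required for $T_1$), the hypothesis $\beta < n/q$ plays the dual role at infinity (required for $T_3$), and the scaling condition $\frac{1}{q} = \frac{1}{p} + \frac{\alpha + \beta - \lambda}{n}$ pins down the correct balance of exponents so that both operators are $L^p \to L^q$-bounded.

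The main piece $T_2$ is handled by a dyadic argument. On each annulus $E_k := \{2^k \leq |x| < 2^{k+1}\}$ both $|x|$ and $|y|$ are comparable to $2^k$ in the relevant range, so that
\[
T_2 g(x) \leq C \cdot 2^{-k(\alpha + \beta)} \, \mathrm{I}_{\lambda}(g \chi_{F_k})(x), \qquad x \in E_k,
\]
where $F_k := \{2^{k-1} \leq |y| \leq 2^{k+2}\}$. Introducing the auxiliary exponent $\tilde{q}$ defined by $\frac{1}{\tilde{q}} = \frac{1}{p} - \frac{\lambda}{n}$ (assuming $p < n/\lambda$; otherwise a direct H\"older bound on the truncated kernel replaces HLS and plays the same role), Theorem \ref{HLS}(2) gives $\|\mathrm{I}_{\lambda}(g \chi_{F_k})\|_{L^{\tilde{q}}(\mathbb{R}^n)} \leq C \|g\|_{L^p(F_k)}$. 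The hypothesis $\alpha + \beta \geq 0$ translates into $q \leq \tilde{q}$, so H\"older's inequality on the bounded set $E_k$ converts the $L^{\tilde{q}}$-estimate into an $L^q(E_k)$-estimate at a cost $|E_k|^{1/q - 1/\tilde{q}} \sim 2^{k(\alpha + \beta)}$, which exactly cancels the prefactor. Summing the resulting inequality $\|T_2 g\|_{L^q(E_k)} \leq C \|g\|_{L^p(F_k)}$ over $k \in \mathbb{Z}$ via the embedding $\ell^p \hookrightarrow \ell^q$ (valid since $p \leq q$) then yields $\|T_2 g\|_{L^q} \leq C \|g\|_{L^p}$.

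The main obstacle is largely bookkeeping: each of the four hypotheses $\alpha < n/p'$, $\beta < n/q$, $\alpha + \beta \geq 0$, and $p \leq q$ plays a distinct and non-redundant role (local integrability of the weight at the origin, local integrability at infinity, non-negativity of the combined singularity on each annulus, and $\ell^p \hookrightarrow \ell^q$ summability), and setting up the dyadic estimate for $T_2$ so that all of them enter correctly at once requires care. Any genuine technical difficulty is concentrated in verifying the one-dimensional weighted Hardy estimates for $T_1$ and $T_3$ when $\alpha$ or $\beta$ is allowed to be negative, and in making the dyadic argument for $T_2$ uniform across the cases $p < n/\lambda$ and $p \geq n/\lambda$ where the appropriate substitute for HLS changes form.
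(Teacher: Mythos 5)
The paper does not actually prove Theorem \ref{SW}: it is quoted as a classical result with a citation to \cite{SteinW}, so there is no in-paper argument to compare yours against. Judged on its own, your proposal is a correct and essentially complete proof along standard lines. The three-region splitting $\{|y|\le |x|/2\}$, $\{|x|/2<|y|\le 2|x|\}$, $\{|y|>2|x|\}$ is the right one; the off-diagonal pieces do reduce to the weighted Hardy inequality for $1\le p\le q$, whose Muckenhoupt-type condition is finite precisely when $\alpha<n/p'$ (for $T_1$) and $\beta<n/q$ (for $T_3$), the scaling relation $\frac1q=\frac1p+\frac{\alpha+\beta-\lambda}{n}$ making the supremum over dilations a constant. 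The dyadic treatment of $T_2$ is also sound: $\frac1q-\frac1{\tilde q}=\frac{\alpha+\beta}{n}\ge 0$ gives $q\le\tilde q$, the factor $|E_k|^{1/q-1/\tilde q}\sim 2^{k(\alpha+\beta)}$ cancels the prefactor exactly, and the bounded overlap of the $F_k$ together with $\ell^p\hookrightarrow\ell^q$ closes the sum. The only loose ends are the ones you flag yourself: writing out the one-dimensional weighted Hardy estimates, and the case $p\ge n/\lambda$. There your proposed ``direct H\"older bound on the truncated kernel'' works for $p>n/\lambda$ but fails at $p=n/\lambda$ (where $(n-\lambda)p'=n$); at that endpoint one should instead pick any finite $\tilde q\ge q$, apply Theorem \ref{HLS} from $L^{p_1}$ with $\frac1{p_1}=\frac1{\tilde q}+\frac{\lambda}{n}<\frac1p$... no, $>\frac1p$, so $p_1<p$, and use H\"older on the finite-measure set $F_k$; the powers of $2^k$ again cancel. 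These are routine repairs, not gaps in the idea.
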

De N\'{a}poli, Drelichman and Dur\'{a}n \cite{DDD2011} found that if we consider radially symmetric functions, then Theorem \ref{SW} holds for a wider range of exponents. In fact, they pointed out that for radially symmetric functions if $p>1$, Theorem \ref{SW} holds for $\alpha+\beta\geq (n-1)(\frac{1}{q}-\frac{1}{p})$; if $p=1$, Theorem \ref{SW} holds for $\alpha+\beta>(n-1)(\frac{1}{q}-1)$.

As we are interested in the extreme case $p=1$ of Stein-Weiss inequality, we turn to the end-point estimates for $L^1$ vector fields, which was pioneered by Bourgain and Brezis \cite{BB2002}. For any vector-valued function $\mathbf{f}\in L^1(\mathbb{R}^n,\mathbb{R}^n)$, let $\mathbf{u}=(-\Delta)^{-1} \mathbf{f}=\Gamma\ast \mathbf{f}$ be the Newtonian potential of $\mathbf{f}$, where $\Gamma$ is defined by
\[
\Gamma(x)=
\begin{cases}
 -\frac{1}{2\pi}\ln|x|,\quad \ \ &n=2,\\
\frac{1}{|\mathbb{S}^{n-1}|(n-2)|x|^{n-2}},\quad \ \ & n\geq 3.
\end{cases}
\]
For $n\geq2$, Bourgain and Brezis \cite{BB2004,BB2007} proved that
\begin{equation}\label{ineq11}
\|\nabla \mathbf{u}\|_{L^{n'}(\mathbb{R}^n)}\leq C(\|\mathbf{f}\|_{L^1(\mathbb{R}^n)}+\|\divg \mathbf{f}\|_{\dot{W}^{-2,n'}(\mathbb{R}^n)}), \ n'=n/(n-1).
\end{equation}
Maz'ya \cite{Mazya2010} established the weighted inequalities related to \eqref{ineq11} as follows:
\begin{enumerate}
\item Let $1\leq q<n'$, $\beta=1-n(1-\frac{1}{q})$ and $\mathbf{f}\in L^1(\mathbb{R}^n,\mathbb{R}^n)$ satisfying $\divg \mathbf{f}=0$. Then it holds that
\begin{equation}\label{ineq13}
\left\|\frac{\nabla \mathbf{u}}{|x|^\beta}\right\|_{L^q(\mathbb{R}^n)}
\leq C\|\mathbf{f}\|_{L^1(\mathbb{R}^n)}.
\end{equation}
\item Let $1<q<n'$, $\beta=1-n(1-\frac{1}{q})$, $\mathbf{f}\in L^1(\mathbb{R}^n,\mathbb{R}^n)$ and $\nabla(-\Delta)^{-1}\divg \mathbf{f}\in L^1(\mathbb{R}^n,\mathbb{R}^n)$. Then it holds that
\begin{equation}\label{ineqM}
\left\|\frac{\nabla \mathbf{u}}{|x|^\beta}\right\|_{L^q(\mathbb{R}^n)}
\leq C(\|\mathbf{f}\|_{L^1(\mathbb{R}^n)}+\left\|\nabla(-\Delta)^{-1}\divg \mathbf{f}\right\|_{L^1(\mathbb{R}^n)}).
\end{equation}
\end{enumerate}
Soon after, Bousquet and Mironescu \cite{BM2011} gave a short proof of Maz'ya's result with improvements. They found that the inequality \eqref{ineqM} holds also for $q=1$. In fact, using Leray decomposition and a similar trick used in the proof of Theorem \ref{ZZB}, we see that the inequality \eqref{ineq13} and the result of Bousquet and Mironescu are equivalent, see Remark \ref{Equivalent}. Inspired by these inequalities, we try to extend the range of exponents that Theorem \ref{SW} holds for weighted vector fields. We introduce a more general fractional integral operator $\mathrm{T}_{\lambda}$ defined by
$$\mathrm{T}_{\lambda}f(x)=K\ast f(x)=\int_{\mathbb{R}^n}K(x-y)f(y)dy,$$
where the kernel $K(x)$ satisfies
\begin{equation}\label{Condition}
(i)\ |K(x)|\leq C|x|^{\lambda-n},\text{ if $|x|\neq0$}; \ (ii)\ |K(x-y)-K(x)|\leq C\frac{|y|}{|x|^{n+1-\lambda}},\text{ if $|y|\leq \frac{|x|}{2}$.}
\end{equation}

Our main result is that
\begin{Thm}\label{ZZB}
Let $n\geq 2$, $0<\lambda<n$, $\alpha<1$, $\beta<\frac{n}{q}$, $\alpha+\beta>0$, $\frac{1}{q}=1+\frac{\alpha+\beta-\lambda}{n}$. Suppose that $K(x)$ satisfies the conditions in \eqref{Condition}. If $1\leq q<\infty$, then
\begin{equation}\label{ineq15}
\left\||x|^{-\beta}\mathrm{T}_{\lambda}\mathbf{f}\right\|_{L^q}\leq C(\left\||x|^{\alpha}\mathbf{f}\right\|_{L^1}+\left\||x|^{\alpha}\nabla(-\Delta)^{-1}\divg \mathbf{f}\right\|_{L^1}).
\end{equation}
\end{Thm}

This paper is organized as follows. In Section 2, we give some notations that have appeared in the context. Section 3 shows the proof of Theorem \ref{ZZB}. In Section 4, applying the new inequality and Stein-Weiss inequality, we give a new proof to Hardy inequality as well as a class of Caffarelli-Kohn-Nirenberg inequalities and obtain new $\divg$-$\curl$ inequalities for vector fields.

\section{Notations and definitions}

Let $\Omega\subseteq\mathbb{R}^n$. The notation $\mathcal{D}(\Omega,\mathbb{R}^n)$ denotes the space of $n$-dimensional vector-valued functions that are infinitely differentiable and have compact supports in $\Omega$. Let $\mathcal{D}'(\Omega,\mathbb{R}^n)$ denote the dual space of $\mathcal{D}(\Omega,\mathbb{R}^n)$. The space $\mathbf{H}^p(\divg,\Omega)$ is defined by
$$\mathbf{H}^p(\divg,\Omega)=\{\mathbf{v}\in L^p(\Omega,\mathbb{R}^n):\divg\mathbf{v}\in L^p(\Omega)\}$$
and is provided with the norm
$$\|\mathbf{v}\|_{\mathbf{H}^p(\divg,\Omega)}=\|\mathbf{v}\|_{L^p(\Omega)}+\|\divg\mathbf{v}\|_{L^p(\Omega)}.$$
We denote by $\mathbf{H}_0^p(\divg,\Omega)$ the closure of $C_c^\infty(\Omega,\mathbb{R}^n)$ in $\mathbf{H}^p(\divg,\Omega)$. It is well-known that $W^{1,p}(\mathbb{R}^n)=W_0^{1,p}(\mathbb{R}^n)$. Proposition \ref{H-div} below implies that $\mathbf{H}^p(\divg,\mathbb{R}^n)$ has the same property, i.e., $\mathbf{H}^p(\divg,\mathbb{R}^n)=\mathbf{H}_0^p(\divg,\mathbb{R}^n)$.

We recall the definition of curl operator. It is defined as a matrix of order $n\geq2$. We denote the elements of the matrix $\CURL \mathbf{v}$ by
 $$\CURL_{ij}\mathbf{v}=\frac{\partial v_i}{\partial x_j}-\frac{\partial v_j}{\partial x_i}, \text{ $i,j=1,2,\cdots,n$, for any $\mathbf{v}=(v_1,v_2,\cdots,v_n)\in \mathcal{D}'(\Omega,\mathbb{R}^n)$.}$$
 For a matrix $\mathbf{A}=(a_{ij})\in \mathcal{D}'(\Omega,\mathbb{R}^{n^2})$, where $i,j=1,2,\cdots,n$, we define its divergence by
 $$\divg \mathbf{A}=(\sum_{j=1}^n\frac{\partial a_{1j}}{\partial x_j},\sum_{j=1}^n\frac{\partial a_{2j}}{\partial x_j},\cdots,\sum_{j=1}^n\frac{\partial a_{nj}}{\partial x_j}).$$
 For any $\mathbf{v}=(v_1,v_2,\cdots,v_n)\in \mathcal{D}'(\Omega,\mathbb{R}^n)$, it holds that
\begin{equation}\label{vectorid}
\divg\CURL \mathbf{v}=\Delta \mathbf{v}-\nabla(\divg \mathbf{v}).
\end{equation}

Throughout this paper, bold typeface will indicate vector or matrix quantities; normal typeface will be used for vector and matrix components and for scalars. To simplify the notations, we write $\|\cdot\|_{L^p}$ instead of $\|\cdot\|_{L^p(\mathbb{R}^n)}$.

\section{Proofs of Theorem \ref{ZZB}}

\begin{Prop}\label{H-div}
Let $1\leq p<\infty$. Then $\mathbf{H}^p(\divg,\mathbb{R}^n)=\mathbf{H}_0^p(\divg,\mathbb{R}^n)$.
\end{Prop}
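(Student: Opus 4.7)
The plan is to show density of $C_c^\infty(\mathbb{R}^n,\mathbb{R}^n)$ in $\mathbf{H}^p(\divg,\mathbb{R}^n)$ via the classical \emph{truncate-then-mollify} procedure. Given any $\mathbf{v}\in \mathbf{H}^p(\divg,\mathbb{R}^n)$, I would first approximate it by compactly supported vector fields in $\mathbf{H}^p(\divg,\mathbb{R}^n)$, and then approximate each compactly supported piece by smooth vector fields via convolution with a standard mollifier, finishing with a diagonal selection.

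For the truncation step, fix $\eta\in C_c^\infty(\mathbb{R}^n)$ with $\eta\equiv 1$ on $B_1(0)$, $\supp\eta\subseteq B_2(0)$, and $|\nabla\eta|\leq C$. Set $\eta_R(x)=\eta(x/R)$ and $\mathbf{v}_R=\eta_R\mathbf{v}$. Then $\mathbf{v}_R$ has support in $B_{2R}(0)$, and by dominated convergence $\mathbf{v}_R\to \mathbf{v}$ in $L^p(\mathbb{R}^n)$. A direct computation (first for $\mathbf{v}\in C^\infty\cap \mathbf{H}^p(\divg)$ and then extended by density in the usual sense of distributions) gives
\[
\divg \mathbf{v}_R=\eta_R\,\divg \mathbf{v}+\nabla\eta_R\cdot \mathbf{v}.
\]
The first term converges to $\divg \mathbf{v}$ in $L^p$ again by dominated convergence. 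The second term satisfies
\[
\|\nabla\eta_R\cdot \mathbf{v}\|_{L^p}\leq \frac{C}{R}\|\mathbf{v}\|_{L^p(R\leq |x|\leq 2R)}\longrightarrow 0
\]
as $R\to\infty$, since $|\nabla\eta_R|\leq C/R$ is supported in $\{R\leq|x|\leq 2R\}$ and $\|\mathbf{v}\|_{L^p}<\infty$. Thus $\mathbf{v}_R\to \mathbf{v}$ in $\mathbf{H}^p(\divg,\mathbb{R}^n)$.

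For the mollification step, let $\rho\in C_c^\infty(B_1(0))$ be a nonnegative mollifier with $\int\rho=1$, and set $\rho_\varepsilon(x)=\varepsilon^{-n}\rho(x/\varepsilon)$. Define $\mathbf{v}_R^\varepsilon=\mathbf{v}_R\ast \rho_\varepsilon$ componentwise; this is a $C_c^\infty(\mathbb{R}^n,\mathbb{R}^n)$ function (supported in $B_{2R+\varepsilon}(0)$). Standard properties of mollifiers yield $\mathbf{v}_R^\varepsilon\to \mathbf{v}_R$ in $L^p$ as $\varepsilon\to 0$. Because convolution commutes with distributional differentiation,
\[
\divg \mathbf{v}_R^\varepsilon=(\divg \mathbf{v}_R)\ast \rho_\varepsilon\longrightarrow \divg \mathbf{v}_R \quad\text{in }L^p(\mathbb{R}^n),
\]
since $\divg \mathbf{v}_R\in L^p(\mathbb{R}^n)$. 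Hence $\mathbf{v}_R^\varepsilon\to \mathbf{v}_R$ in $\mathbf{H}^p(\divg,\mathbb{R}^n)$.

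A diagonal argument---choosing $R_k\to\infty$ and then $\varepsilon_k\to 0$ small enough that $\|\mathbf{v}_{R_k}^{\varepsilon_k}-\mathbf{v}_{R_k}\|_{\mathbf{H}^p(\divg)}<1/k$---produces a sequence in $C_c^\infty(\mathbb{R}^n,\mathbb{R}^n)$ converging to $\mathbf{v}$ in $\mathbf{H}^p(\divg,\mathbb{R}^n)$. There is no substantive obstacle here: the argument is routine and the only subtle point is controlling the cross term $\nabla\eta_R\cdot \mathbf{v}$, which is handled by the decay of $|\nabla\eta_R|$ together with the absolute continuity of the $L^p$ norm of $\mathbf{v}$. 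Note the unboundedness of $\mathbb{R}^n$ (and hence the absence of boundary) is what makes the global density $\mathbf{H}^p(\divg,\mathbb{R}^n)=\mathbf{H}_0^p(\divg,\mathbb{R}^n)$ possible, paralleling the well-known identity $W^{1,p}(\mathbb{R}^n)=W_0^{1,p}(\mathbb{R}^n)$.
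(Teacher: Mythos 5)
Your proof is correct and follows essentially the same route as the paper's: a cutoff-plus-mollification density argument, with the cross term $\nabla\eta_R\cdot\mathbf{v}$ controlled by the $C/R$ decay of the cutoff gradient. The only cosmetic difference is the order of the two steps (you truncate then mollify, the paper mollifies then truncates), which changes nothing of substance.
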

\begin{proof}
It suffices to show that $C_c^\infty(\mathbb{R}^n,\mathbb{R}^n)$ is dense in $\mathbf{H}^p(\divg,\mathbb{R}^n)$. Assume that $\mathbf{v}\in \mathbf{H}^p(\divg,\mathbb{R}^n)$. Set $\mathbf{v}^\varepsilon=\eta_\varepsilon\ast\mathbf{v}$, where $\eta$ is the standard mollifier. We have that $\mathbf{v}^\varepsilon\in C^\infty(\mathbb{R}^n,\mathbb{R}^n)$ and $\mathbf{v}^\varepsilon\rightarrow \mathbf{v}$ in $\mathbf{H}^p(\divg,\mathbb{R}^n)$ as $\varepsilon\rightarrow 0$. Let $\zeta\in C_c^\infty(B_2(0))$ be a cut-off function such that $0\leq\zeta\leq 1$ and $\zeta\equiv 1$ in $B_1(0)$. Set $\mathbf{v}^\varepsilon_k(x)=\mathbf{v}^\varepsilon(x)\zeta(\frac{x}{k})$, then $\mathbf{v}^\varepsilon_k\in C_c^\infty(\mathbb{R}^n,\mathbb{R}^n)$. By the definition of divergence operator, we get
$$\divg\mathbf{v}^\varepsilon_k(x)=\divg\mathbf{v}^\varepsilon(x)\zeta(\frac{x}{k})+\frac{1}{k}\mathbf{v}^\varepsilon(x)\cdot(\nabla\zeta)(\frac{x}{k}).$$ Therefore, for any fixed $\varepsilon$, we have
$$\left\|\divg\mathbf{v}^\varepsilon_k-\divg\mathbf{v}^\varepsilon\right\|_{L^p}^p\leq C\left(\int_{|x|>k}|\divg\mathbf{v}^\varepsilon|^pdx+\frac{1}{k}\|\mathbf{v}^\varepsilon\|_{L^p}^p\right)\rightarrow 0\text{ as $k\rightarrow \infty$}$$
and
$$\left\|\mathbf{v}^\varepsilon_k-\mathbf{v}^\varepsilon\right\|_{L^p}^p\leq\int_{|x|>k}|\mathbf{v}^\varepsilon|^pdx\rightarrow 0\text{ as $k\rightarrow \infty$.}$$
By the above two inequalities and using the fact that $\mathbf{v}^\varepsilon\rightarrow \mathbf{v}$ in $\mathbf{H}^p(\divg,\mathbb{R}^n)$ as $\varepsilon\rightarrow 0$, we know that for any $\varepsilon>0$, there exists $k=k(\varepsilon)$ such that $\mathbf{v}^\varepsilon_k\rightarrow \mathbf{v}$ in $\mathbf{H}^p(\divg,\mathbb{R}^n)$ as $\varepsilon\rightarrow 0$.
\end{proof}

Since $C_c^\infty(\mathbb{R}^n,\mathbb{R}^n)$ is dense in $\mathbf{H}^1(\divg,\mathbb{R}^n)$, by the divergence theorem and an approximation argument, we obtain the following corollary.
\begin{Cor}\label{Gauss0}
For any $\mathbf{v}\in\mathbf{H}^1(\divg,\mathbb{R}^n)$, it holds that
$$\int_{\mathbb{R}^n}\divg \mathbf{v}dx=0.$$
\end{Cor}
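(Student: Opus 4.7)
The plan is to reduce the statement to the smooth compactly supported case and then invoke the density result just proved in Proposition \ref{H-div}. Since the paper explicitly introduces this corollary as a consequence of the density of $C_c^\infty(\mathbb{R}^n,\mathbb{R}^n)$ in $\mathbf{H}^1(\divg,\mathbb{R}^n)$ together with the divergence theorem, the proof should be short and essentially a continuity argument.

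First I would verify the claim for a test field. Let $\boldsymbol{\varphi}\in C_c^\infty(\mathbb{R}^n,\mathbb{R}^n)$ and choose $R>0$ so large that $\supp\boldsymbol{\varphi}\subset B_R(0)$. Applying the classical divergence theorem on $B_R(0)$ gives
\[
\int_{\mathbb{R}^n}\divg\boldsymbol{\varphi}\,dx=\int_{B_R(0)}\divg\boldsymbol{\varphi}\,dx=\int_{\partial B_R(0)}\boldsymbol{\varphi}\cdot\boldsymbol{\nu}\,dS=0,
\]
since $\boldsymbol{\varphi}$ vanishes on $\partial B_R(0)$.

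Next I would extend to the general case by approximation. Given $\mathbf{v}\in\mathbf{H}^1(\divg,\mathbb{R}^n)$, Proposition \ref{H-div} supplies a sequence $\mathbf{v}_k\in C_c^\infty(\mathbb{R}^n,\mathbb{R}^n)$ with $\mathbf{v}_k\to\mathbf{v}$ in $\mathbf{H}^1(\divg,\mathbb{R}^n)$; in particular $\divg\mathbf{v}_k\to\divg\mathbf{v}$ in $L^1(\mathbb{R}^n)$. From the first step, $\int_{\mathbb{R}^n}\divg\mathbf{v}_k\,dx=0$ for every $k$, hence
\[
\left|\int_{\mathbb{R}^n}\divg\mathbf{v}\,dx\right|=\left|\int_{\mathbb{R}^n}(\divg\mathbf{v}-\divg\mathbf{v}_k)\,dx\right|\leq\|\divg\mathbf{v}-\divg\mathbf{v}_k\|_{L^1}\longrightarrow 0,
\]
which yields the desired identity.

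There is no real obstacle here: the only non-trivial ingredient is the density statement of Proposition \ref{H-div}, which has already been established. The rest is a standard continuity of the linear functional $\mathbf{v}\mapsto\int\divg\mathbf{v}\,dx$ on $\mathbf{H}^1(\divg,\mathbb{R}^n)$, whose kernel contains the dense subspace $C_c^\infty(\mathbb{R}^n,\mathbb{R}^n)$ and therefore is all of $\mathbf{H}^1(\divg,\mathbb{R}^n)$.
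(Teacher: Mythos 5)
Your proof is correct and follows exactly the route the paper indicates: the divergence theorem for compactly supported smooth fields, followed by the approximation argument using the density established in Proposition \ref{H-div} and the continuity of $\mathbf{v}\mapsto\int_{\mathbb{R}^n}\divg\mathbf{v}\,dx$ with respect to the $\mathbf{H}^1(\divg,\mathbb{R}^n)$ norm. Nothing is missing.
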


Let $\rho_0\in C_{c}^{\infty}(\mathbb{R}^{+})$ be a cut-off function such that $0\leq\rho_0\leq 1$ and
\[
\rho_0(t)=
\begin{cases}
 1,\quad \ \ &t\leq\frac{1}{4},\\
0,\quad \ \ & t\geq\frac{1}{2}.
\end{cases}
\]
We denote $\rho(y,x)=\rho_0(\frac{|y|}{|x|})$ for $(y,x)\in\mathbb{R}^n\times(\mathbb{R}^n\backslash\{0\})$. We extract a lemma from the proof of the main theorem in \cite{BM2011}, but in a simple case.
\begin{Lem}\label{BMlemma}
Let $n\geq 2$, $\mathbf{f}\in L^1_{loc}(\mathbb{R}^n,\mathbb{R}^n)$ with $\divg\mathbf{f}=0$. Then we have
$$\left|\int_{\mathbb{R}^n}\rho(y, x)\mathbf{f}(y)dy\right|\leq C\int_{|y|\leq\frac{|x|}{2}}\frac{|y|}{|x|}|\mathbf{f}(y)|dy.$$
\end{Lem}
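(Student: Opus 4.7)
The plan is to exploit the divergence-free condition by a single integration by parts against a carefully chosen compactly supported test function, with one component handled at a time.

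Fix $x \in \mathbb{R}^n \setminus \{0\}$ and $i \in \{1, \ldots, n\}$. I would introduce the scalar test function
$$\phi_i(y) = \rho(y,x)\, y_i = \rho_0\!\left(\tfrac{|y|}{|x|}\right) y_i.$$
Because $\rho_0 \in C_c^\infty(\mathbb{R}^+)$ and $\rho_0 \equiv 1$ on $[0,1/4]$, the map $y \mapsto \rho_0(|y|/|x|)$ is smooth across $y=0$ (it is locally constant there), hence $\phi_i \in C_c^\infty(\mathbb{R}^n)$ with support in $\{|y| \leq |x|/2\}$. A direct computation gives
$$\nabla_y \phi_i(y) = \rho(y,x)\, \mathbf{e}_i + y_i \nabla_y \rho(y,x),$$
so that, denoting the $i$-th component integral by $I_i = \int_{\mathbb{R}^n} \rho(y,x) f_i(y)\,dy$,
$$I_i = \int_{\mathbb{R}^n} \nabla_y \phi_i(y) \cdot \mathbf{f}(y)\, dy \; - \; \int_{\mathbb{R}^n} y_i\, \nabla_y \rho(y,x) \cdot \mathbf{f}(y)\, dy.$$

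The first integral vanishes: since $\mathbf{f} \in L^1_{\mathrm{loc}}$ and $\divg \mathbf{f} = 0$ in $\mathcal{D}'(\mathbb{R}^n)$, the pairing $\int \nabla_y \phi_i \cdot \mathbf{f}\, dy$ is zero for every $\phi_i \in C_c^\infty(\mathbb{R}^n)$ by the very definition of the distributional divergence. This is the only place where the hypothesis $\divg \mathbf{f} = 0$ enters.

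For the second integral, I would use
$$\nabla_y \rho(y,x) = \rho_0'\!\left(\tfrac{|y|}{|x|}\right) \frac{y}{|y|\,|x|},$$
which is supported in $\{|x|/4 \leq |y| \leq |x|/2\}$ and satisfies $|\nabla_y \rho(y,x)| \leq \|\rho_0'\|_{L^\infty}/|x|$, together with $|y_i| \leq |y|$. This yields
$$|I_i| \leq C \int_{|y| \leq |x|/2} \frac{|y|}{|x|}\, |\mathbf{f}(y)|\, dy.$$
Summing over $i$ (or applying this to each Cartesian component of the vector-valued integral on the left of the stated inequality) and absorbing the constant $n$ into $C$ gives the claim.

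I do not foresee a real obstacle: the only subtlety is confirming that the test function $\phi_i(y) = \rho(y,x) y_i$ is genuinely in $C_c^\infty(\mathbb{R}^n)$ for fixed $x \neq 0$, which follows because $\rho_0$ is flat equal to $1$ near the origin. Everything else is a one-line integration by parts and a size bound on $\nabla_y \rho$.
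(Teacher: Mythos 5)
Your proof is correct and takes essentially the same route as the paper: both integrate the divergence-free condition against the test function $y_i\rho(y,x)$ and bound the leftover term coming from $\nabla_y\rho$, which is supported in $\{|x|/4\le|y|\le|x|/2\}$ with size $O(1/|x|)$. The only difference is packaging — the paper phrases the vanishing of $\int\nabla_y(y_i\rho)\cdot\mathbf{f}\,dy$ via its Corollary on $\int_{\mathbb{R}^n}\divg\mathbf{v}\,dx=0$ for $\mathbf{v}\in\mathbf{H}^1(\divg,\mathbb{R}^n)$, whereas you appeal directly to the distributional definition of $\divg\mathbf{f}=0$, which is marginally more self-contained.
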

\begin{proof}
For any $|x|\neq 0$, we have $y_i\rho(y, x)\mathbf{f}(y)\in\mathbf{H}^1(\divg,\mathbb{R}^n)$ and
$$\divg(y_i\rho(y, x)\mathbf{f}(y))=\nabla_y(y_i\rho(y, x))\cdot\mathbf{f}(y)+y_i\rho(y, x)\divg\mathbf{f}=\nabla_y(y_i\rho(y, x))\cdot\mathbf{f}(y).$$
By Corollary \ref{Gauss0}, we get
$$\int_{\mathbb{R}^n}\divg(y_i\rho(y, x)\mathbf{f}(y))dy=0.$$
Thus
$$
\int_{\mathbb{R}^n}\rho(y, x)f_i(y)+\frac{y_i}{|y||x|}\rho_0'(\frac{|y|}{|x|})\sum_{j=1}^n y_jf_j(y)dy=0.
$$
So we get
$$\left|\int_{\mathbb{R}^n}\rho(y, x)f_i(y)dy\right|\leq C\int_{|y|\leq\frac{|x|}{2}}\frac{|y|}{|x|}|\mathbf{f}(y)|dy.$$
\end{proof}

\begin{proof}[Proof of Theorem \ref{ZZB}]
First, we claim that Theorem \ref{ZZB} is equivalent to the following statement.
\begin{Claim}\label{claim}
Let $n\geq 2$, $0<\lambda<n$, $\alpha<1$, $\beta<\frac{n}{q}$, $\alpha+\beta>0$, $\frac{1}{q}=1+\frac{\alpha+\beta-\lambda}{n}$ and $\divg \mathbf{f}=0$. If $1\leq q<\infty$, then
\begin{equation}\label{Zineq1}
\left\||x|^{-\beta}\mathrm{T}_{\lambda}\mathbf{f}\right\|_{L^q}\leq C\left\||x|^{\alpha}\mathbf{f}\right\|_{L^1}.
\end{equation}
\end{Claim}
It is easy to see that Claim \ref{claim} is a special case of Theorem \ref{ZZB}. We only
need to derive Theorem \ref{ZZB} from  Claim \ref{claim}. We decompose $\mathbf{f}$ as $\mathbf{f}=\mathbf{f}+\nabla(-\Delta)^{-1}\divg \mathbf{f}-\nabla(-\Delta)^{-1}\divg \mathbf{f}$, which is called the Leray decomposition of $\mathbf{f}$. $\mathbf{f}+\nabla(-\Delta)^{-1}\divg \mathbf{f}$ is the divergence-free part while $\nabla(-\Delta)^{-1}\divg \mathbf{f}$ is the curl-free part. Since $\divg(\mathbf{f}+\nabla(-\Delta)^{-1}\divg \mathbf{f})=0$, by Claim \ref{claim}, we obtain
\begin{equation}\label{ineq31}
\aligned
\left\||x|^{-\beta}\mathrm{T}_{\lambda}(\mathbf{f}+\nabla(-\Delta)^{-1}\divg \mathbf{f})\right\|_{L^q}&\leq C\left\||x|^{\alpha}(\mathbf{f}+\nabla(-\Delta)^{-1}\divg \mathbf{f})\right\|_{L^1}\\
&\leq C(\||x|^{\alpha}\mathbf{f}\|_{L^1}+\left\||x|^{\alpha}\nabla(-\Delta)^{-1}\divg \mathbf{f}\right\|_{L^1}).
\endaligned
\end{equation}
On the other hand, for $1\leq i<j\leq n$, set $(g_i,g_j)=(\frac{\partial}{\partial x_j}(-\Delta)^{-1}\divg \mathbf{f},-\frac{\partial}{\partial x_i}(-\Delta)^{-1}\divg \mathbf{f}$), $g_k=0$ if $k\neq i,j$. Then we have $\divg \mathbf{g}=0$.
By Claim \ref{claim}, we obtain
$$
\aligned
&\left\||x|^{-\beta}\mathrm{T}_{\lambda}(\frac{\partial}{\partial x_i}(-\Delta)^{-1}\divg \mathbf{f})\right\|_{L^q}+\left\||x|^{-\beta}\mathrm{T}_{\lambda}(\frac{\partial}{\partial x_j}(-\Delta)^{-1}\divg \mathbf{f})\right\|_{L^q}\\
&\leq 2\left\||x|^{-\beta}\mathrm{T}_{\lambda}\mathbf{g}\right\|_{L^q}\leq C\||x|^{\alpha}\mathbf{g}\|_{L^1}\leq C\left\||x|^{\alpha}\nabla(-\Delta)^{-1}\divg \mathbf{f}\right\|_{L^1}.
\endaligned
$$
Hence, we have
\begin{equation}\label{ineq32}
\left\||x|^{-\beta}\mathrm{T}_{\lambda}(\nabla(-\Delta)^{-1}\divg \mathbf{f})\right\|_{L^q}\leq C\left\||x|^{\alpha}\nabla(-\Delta)^{-1}\divg \mathbf{f}\right\|_{L^1}.
\end{equation}
Therefore, the inequality \eqref{Zineq1} follows from the inequalities \eqref{ineq31} and \eqref{ineq32}.

Hence, we only need to prove the case of $\divg \mathbf{f}=0$. The benefit of this observation is that there is no need to deal with the term $\int_{\mathbb{R}^n}y_i\rho(y, x)\divg\mathbf{f}(y)dy$ in Lemma \ref{BMlemma} for $\mathbf{f}$ is a divergence-free vector field, which is different from \cite{BM2011}.

The proof of Claim \ref{claim} consists of the following steps.

Step 1. We write $\mathrm{T}_{\lambda}\mathbf{f}(x)=J_1(x)+J_2(x)$, where
$$
J_1(x)=\int_{\mathbb{R}^n}\rho(y, x)K(x-y)\mathbf{f}(y)dy,\ J_2(x)=\int_{\mathbb{R}^n}(1-\rho(y, x))K(x-y)\mathbf{f}(y)dy.
$$
By the condition (i) in \eqref{Condition} and generalized Minkowski's inequality, we have
$$
\aligned
 \left\||x|^{-\beta}J_2(x)\right\|_{L^q}&\leq C\left\||x|^{-\beta}\int_{|y|\geq\frac{|x|}{4}}\frac{|\mathbf{f}(y)|}{|x-y|^{n-\lambda}}dy\right\|_{L^q}\\
 &\leq C\int_{\mathbb{R}^n}\left(\int_{|x|\leq 4|y|}\frac{|x|^{-\beta q}}{|x-y|^{(n-\lambda)q}}dx\right)^{\frac{1}{q}}|\mathbf{f}(y)|dy.
\endaligned
$$
Since
$$
\aligned
\int_{|x|\leq 4|y|}\frac{|x|^{-\beta q}}{|x-y|^{(n-\lambda)q}}dx&=\int_{|x|\leq \frac{|y|}{2}}\frac{|x|^{-\beta q}}{|x-y|^{(n-\lambda)q}}dx+\int_{\frac{|y|}{2}\leq|x|\leq 4|y|}\frac{|x|^{-\beta q}}{|x-y|^{(n-\lambda)q}}dx\\
&\leq C\left(|y|^{(\lambda-n)q}\int_{|x|\leq \frac{|y|}{2}}|x|^{-\beta q}dx+|y|^{-\beta q}\int_{|x-y|\leq 5|y|}|x-y|^{(\lambda-n)q}dx\right)\\
&=C|y|^{n-\beta q+(\lambda-n)q}=C|y|^{\alpha q},
\endaligned
$$
here we require $n-\beta q>0$ and $n+(\lambda-n)q>0$, then we get
\begin{equation}\label{ineq35}
\left\||x|^{-\beta}J_2(x)\right\|_{L^q}\leq C\int_{\mathbb{R}^n}|y|^\alpha|\mathbf{f}(y)|dy.
\end{equation}

Step 2. We write $J_1(x)=J_{11}(x)+J_{12}(x)$, where
$$J_{11}(x)=\int_{\mathbb{R}^n}\rho(y, x)(K(x-y)-K(x))\mathbf{f}(y)dy,\ J_{12}(x)=\int_{\mathbb{R}^n}\rho(y, x)K(x)\mathbf{f}(y)dy.$$
Thus by generalized Minkowski's inequality and the condition (ii) in \eqref{Condition}, we obtain
\begin{equation}\label{ineq36}
\aligned
\left\||x|^{-\beta}J_{11}(x)\right\|_{L^q}&\leq \int_{\mathbb{R}^n}\left(\int_{\mathbb{R}^n}(\rho(y, x)|x|^{-\beta}|K(x-y)-K(x)|)^q dx\right)^{\frac{1}{q}}|\mathbf{f}(y)|dy\\
&\leq C\int_{\mathbb{R}^n}\left(\int_{|x|\geq 2|y|}|x|^{(\lambda-n-1-\beta) q}dx\right)^{\frac{1}{q}}|y||\mathbf{f}(y)|dy\\
&=C\int_{\mathbb{R}^n}(|y|^{(\alpha-1)q})^{\frac{1}{q}}|y||\mathbf{f}(y)|dy=C\int_{\mathbb{R}^n}|y|^\alpha|\mathbf{f}(y)|dy,
\endaligned
\end{equation}
here we require $n+(\lambda-n-1-\beta)q<0$, i.e., $\alpha<1$.

Step 3. At last we deal with the term $J_{12}(x)$. By the condition (i) in \eqref{Condition}, we have
$$|J_{12}(x)|\leq C\frac{1}{|x|^{n-\lambda}}\left|\int_{\mathbb{R}^n}\rho(y, x)\mathbf{f}(y)dy\right|.$$
Using Lemma \ref{BMlemma} and generalized Minkowski's inequality, we get
\begin{equation}\label{ineq37}
\aligned
 \left\||x|^{-\beta}J_{12}(x)\right\|_{L^q}&\leq C\left\||x|^{\lambda-n-\beta}\int_{|y|\leq\frac{|x|}{2}}\frac{|y|}{|x|}|\mathbf{f}(y)|dy\right\|_{L^q}\\
 &\leq C\int_{\mathbb{R}^n}\left(\int_{|x|\geq 2|y|}|x|^{(\lambda-n-\beta-1)q}dx\right)^{\frac{1}{q}}|y||\mathbf{f}(y)|dy\\
 &=C\int_{\mathbb{R}^n}|y|^\alpha|\mathbf{f}(y)|dy,
\endaligned
\end{equation}
here we require $n+(\lambda-n-\beta-1)q<0$.

Combining the inequalities \eqref{ineq35}, \eqref{ineq36} and \eqref{ineq37}, we obtain the result.
\end{proof}

\begin{Rem}
Let $\mathbf{u}=(-\Delta)^{-1} \mathbf{f}$. For $n\geq3$, using Theorem \ref{ZZB} with $K(x)=|x|^{2-n}$, we have
$$
\left\|\frac{\mathbf{u}}{|x|^\beta}\right\|_{L^q}
\leq C(\|\mathbf{f}\|_{L^1}+\|\nabla(-\Delta)^{-1}\divg \mathbf{f}\|_{L^1}),
$$
where $1\leq q<\frac{n}{n-2}$, $\beta=2+n(\frac{1}{q}-1)$. If we set $K(x)=\frac{x_j}{|x|^n}$ in Theorem \ref{ZZB}, then we can see that our result generalizes the inequality \eqref{ineqM} in a doubly weighted form.
\end{Rem}

\begin{Rem}\label{Equivalent}
Applying inequality \eqref{ineq13} to $\mathbf{g}$ as the proof of Theorem \ref{ZZB}, we can get
$$\left\|\frac{\nabla(-\Delta)^{-1} \nabla(-\Delta)^{-1}\divg \mathbf{f}}{|x|^\beta}\right\|_{L^q}
\leq C\left\|\nabla(-\Delta)^{-1}\divg \mathbf{f}\right\|_{L^1},$$
where $1\leq q<n'$, $\beta=1-n(1-\frac{1}{q})$. Using this inequality, we can derive inequality \eqref{ineqM} for $1\leq q<n'$ from inequality \eqref{ineq13} by the same method used in the proof of Theorem \ref{ZZB}.
\end{Rem}

\section{Applications}
There are many proofs to Hardy inequality (cf. \cite[p. 111]{DDE2012}). Here we give a new proof of Hardy inequality by the theory of singular integrals. If $1<p<n$, we can use Theorem \ref{SW} to prove Hardy inequality
$$\left\|\frac{u}{|x|}\right\|_{L^p}\leq C\|\nabla u\|_{L^p} \text{  for any  $u\in C_{c}^{\infty}(\mathbb{R}^n)$.}$$
In fact, for any $u\in C_{c}^{\infty}(\mathbb{R}^n)$, we have the equality (see \cite[Lemma 7.14]{GT1998})
$$u(x)=\frac{1}{|\mathbb{S}^{n-1}|}\int_{\mathbb{R}^n}\frac{(x-y)\cdot\nabla u(y)}{|x-y|^n}dy.$$
If $p>1$, by Theorem \ref{SW}, we have
$$\left\|\frac{u}{|x|}\right\|_{L^p}\leq \frac{1}{|\mathbb{S}^{n-1}|}\left\|\frac{1}{|x|}\int_{\mathbb{R}^n}\frac{|\nabla u(y)|}{|x-y|^{n-1}}dy\right\|_{L^p}\leq C\|\nabla u\|_{L^p}.$$
But Theorem \ref{SW} doesn't work when $p=1$. Our theorem make it possible to prove the case $p=1$. In view of \eqref{vectorid}, for any function $\mathbf{f}\in C_c^\infty(\mathbb{R}^n,\mathbb{R}^n)$, we have $$\left\||x|^{\alpha}\mathbf{f}\right\|_{L^1}+\left\||x|^{\alpha}\nabla(-\Delta)^{-1}\divg\mathbf{f}\right\|_{L^1}\leq 2(\left\||x|^{\alpha}\mathbf{f}\right\|_{L^1}+\left\||x|^{\alpha}\divg(-\Delta)^{-1}\CURL\mathbf{f}\right\|_{L^1}).$$
Therefore, for divergence-free or curl-free smooth vector fields with compact support, the term $\left\|\nabla(-\Delta)^{-1}\divg\mathbf{f}\right\|_{L^1}$ can be removed in \eqref{ineq15}.
If $p=1$, since
$$\nabla(-\Delta)^{-1}\divg\nabla u=-\nabla u \text{ or }  \CURL\nabla u=\mathbf{0},$$
setting $K(x)=\frac{x_j}{|x|^n}$ in Theorem \ref{ZZB}, we get
$$
\aligned
\left\|\frac{u}{|x|}\right\|_{L^1}&=\frac{1}{|\mathbb{S}^{n-1}|}\left\|\frac{1}{|x|}\int_{\mathbb{R}^n}\frac{(x-y)\cdot\nabla u(y)}{|x-y|^n}dy\right\|_{L^1}\\
&\leq \frac{1}{|\mathbb{S}^{n-1}|}\sum_{j=1}^n\left\|\frac{1}{|x|}\int_{\mathbb{R}^n}\frac{x_j-y_j}{|x-y|^n}\nabla u(y)dy\right\|_{L^1}\\
&\leq C\|\nabla u\|_{L^1}.
\endaligned
$$
Moreover, by the same idea, we can give a new proof of a class of Caffarelli-Kohn-Nirenberg inequalities (see \cite{CKN1984}) as follows:
\begin{enumerate}
\item Let $n\geq 2$. If $\alpha<1$, $\beta<\frac{n}{q}$, $0<\alpha+\beta\leq 1$ and $\frac{1}{q}=1+\frac{\alpha+\beta-1}{n}$, then
\begin{equation}\label{CKN1}
\left\|\frac{u}{|x|^{\beta}}\right\|_{L^q}\leq C\left\||x|^{\alpha}\nabla u\right\|_{L^1}.
\end{equation}
\item If $1<p<\infty$, $\alpha<\frac{n}{p'}$, $\beta<\frac{n}{q}$, $\alpha+\beta\geq 0$ and $\frac{1}{q}=\frac{1}{p}+\frac{\alpha+\beta-1}{n}$, $p\leq q<\infty$, then
\begin{equation}\label{CKNp}
\left\|\frac{u}{|x|^{\beta}}\right\|_{L^q}\leq C\left\||x|^{\alpha}\nabla u\right\|_{L^p}.
\end{equation}
\end{enumerate}
Take $\alpha=0$ in the inequality \eqref{CKN1}, we get
$$\left\|\frac{u}{|x|^{\beta}}\right\|_{L^q}\leq C\left\|\nabla u\right\|_{L^1},$$
where $n\geq2$, $\beta=1-n(1-\frac{1}{q})$ and $1\leq q<n'$.

Another application is to establish some new weighted $\divg$-$\curl$ inequalities. By the way, we point out that $\divg$-$\curl$ inequalities involving $L^1$ norm have been studied by \cite{BB2004}, \cite{LS2005}, \cite{MM2009}, \cite{VanSch2010}, \cite{Lou2014}, \cite{XZ2015}, and the references therein.

\begin{Thm}\label{ZZBn}
Let $\mathbf{u}\in C^\infty_c(\mathbb{R}^n,\mathbb{R}^n)$.
\begin{enumerate}
\item Let $n\geq 3$, $\alpha<1$, $\beta<\frac{n}{q}$, $0<\alpha+\beta\leq 1$ and $\frac{1}{q}=1+\frac{\alpha+\beta-1}{n}$. If $\divg \mathbf{u}=0$, then
    $$\left\|\frac{\mathbf{u}}{|x|^{\beta}}\right\|_{L^q}\leq C\left\||x|^{\alpha}\CURL \mathbf{u}\right\|_{L^1}.$$
\item Let $n\geq 2$, $1<p<\infty$, $\alpha<\frac{n}{p'}$, $\beta<\frac{n}{q}$, $\alpha+\beta\geq 0$ and $\frac{1}{q}=\frac{1}{p}+\frac{\alpha+\beta-1}{n}$. If $p\leq q<\infty$, then
$$\left\|\frac{\mathbf{u}}{|x|^{\beta}}\right\|_{L^q}\leq C(\left\||x|^{\alpha}\divg \mathbf{u}\right\|_{L^p}+\left\||x|^{\alpha}\CURL \mathbf{u}\right\|_{L^p}).$$
\end{enumerate}
\end{Thm}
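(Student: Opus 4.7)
The plan is to follow closely the proof of Theorem \ref{threecurl} in the three-dimensional case. The key structural input is the vector identity \eqref{vectorid}, rewritten as $-\Delta\mathbf{u} = -\divg\CURL\mathbf{u} - \nabla\divg\mathbf{u}$. Convolving with the Newtonian potential $\Gamma$ and integrating by parts twice (exactly as in \eqref{vector-id1}) yields the higher-dimensional analog
\begin{equation*}
u_i(x)=\frac{1}{|\mathbb{S}^{n-1}|}\sum_{j=1}^n\int_{\mathbb{R}^n}\frac{x_j-y_j}{|x-y|^n}\CURL_{ij}\mathbf{u}(y)\,dy+\frac{1}{|\mathbb{S}^{n-1}|}\int_{\mathbb{R}^n}\frac{x_i-y_i}{|x-y|^n}\divg\mathbf{u}(y)\,dy,
\end{equation*}
which expresses each component $u_i$ as a sum of $\mathrm{T}_1$-type convolutions with the scalar kernels $K_j(x)=x_j/|x|^n$ against the entries of $\CURL\mathbf{u}$ and against $\divg\mathbf{u}$. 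Each such kernel satisfies \eqref{Condition}, so all of the operator-theoretic tools previously developed are available.

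With the representation in place, item (3) is immediate: after taking absolute values inside the finite sum, Stein--Weiss (Theorem \ref{SW}) applied separately to each fractional integral of order one gives the desired $\||x|^{-\beta}\mathbf{u}\|_{L^q}\leq C(\||x|^\alpha\divg\mathbf{u}\|_{L^p}+\||x|^\alpha\CURL\mathbf{u}\|_{L^p})$. For items (1) and (2) the scalar $\divg\mathbf{u}$-term is handled by Proposition \ref{Ben-imp} applied with kernel $K_i$: in item (1) the term vanishes outright since $\divg\mathbf{u}=0$, while in item (2) the assumption $\divg\mathbf{u}\in\mathcal{H}^1$ is exactly what produces the $\sum_j\|\mathrm{R}_j\divg\mathbf{u}\|_{L^1}$ on the right-hand side.

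The main obstacle is the $\CURL$-term in the $L^1$ cases (items (1) and (2)). In three dimensions one simply invokes $\divg\curl\mathbf{u}=0$ and applies Theorem \ref{ZZB0} (or \ref{ZZB}) to the divergence-free vector field $\curl\mathbf{u}$, so no spurious term appears. For $n\geq 4$ the object $\CURL\mathbf{u}$ is genuinely matrix-valued, and its $j$-th column $\mathbf{Z}^{(j)}=(\CURL_{1j}\mathbf{u},\ldots,\CURL_{nj}\mathbf{u})$ satisfies $\divg\mathbf{Z}^{(j)}=\partial_j\divg\mathbf{u}-\Delta u_j$, which is not zero even under $\divg\mathbf{u}=0$. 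A direct application of Theorem \ref{ZZB} to $\mathbf{Z}^{(j)}$ would therefore introduce a spurious $\||x|^\alpha\nabla u_j\|_{L^1}$ that is not controlled by $\||x|^\alpha\CURL\mathbf{u}\|_{L^1}$ alone.

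My way around this obstruction is to perform the Leray decomposition $\mathbf{Z}^{(j)}=\partial_j\mathbf{u}+(-\nabla u_j)$, apply Theorem \ref{ZZB} (or \ref{ZZB0}) to the divergence-free piece $\partial_j\mathbf{u}$ where no extra term appears, and, for the gradient piece $-\nabla u_j$, reuse the two-component divergence-free auxiliary vectors constructed in Step 2 of the proof of Theorem \ref{ZZB0}. Summing the contributions over $j$ and exploiting both the identity $\sum_j K_j\ast\partial_j u_i=|\mathbb{S}^{n-1}|u_i$ and the antisymmetry of $\CURL\mathbf{u}$ (which in particular forces $\divg\divg\CURL\mathbf{u}=0$) is what should allow the gradient contributions to collapse back into the weighted $L^1$-norm of $\CURL\mathbf{u}$ itself. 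This is the delicate step that replaces the clean appeal to $\divg\curl\mathbf{u}=0$ available only in $n=3$, and it is where I expect the real technical work to lie.
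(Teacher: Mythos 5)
Your representation formula, your treatment of item (3) via Stein--Weiss, and your handling of the $\divg\mathbf{u}$-term in item (2) via Proposition \ref{Ben-imp} all match the paper. You have also correctly identified the real difficulty: for $n\geq 4$ the columns $\mathbf{Z}^{(j)}$ of $\CURL\mathbf{u}$ are not divergence-free, so Theorem \ref{ZZB} cannot be applied to them directly. But your proposed fix does not close the gap. Once you apply Theorem \ref{ZZB} (or \ref{ZZB0}) separately to the divergence-free piece $\partial_j\mathbf{u}$ and to the auxiliary fields built from $\nabla u_j$, each application produces a right-hand side of the form $C\left\||x|^{\alpha}\partial_j\mathbf{u}\right\|_{L^1}$ or $C\left\||x|^{\alpha}\nabla u_j\right\|_{L^1}$; these are norm bounds, so no subsequent summation over $j$ or appeal to the antisymmetry of $\CURL\mathbf{u}$ can make them cancel or collapse back into $\left\||x|^{\alpha}\CURL\mathbf{u}\right\|_{L^1}$. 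You would end up with $\left\||x|^{\alpha}\nabla\mathbf{u}\right\|_{L^1}$ on the right, which is not controlled by the weighted $L^1$-norm of $\CURL\mathbf{u}$ --- this is precisely the failure of the $L^1$ div-curl estimate that these theorems are designed to circumvent. The ``delicate step'' you defer is therefore not a technicality but the missing idea.

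The paper's resolution is different and elementary: for each triple of indices $1\leq i<j<k\leq n$ it forms the vector field $\mathbf{f}$ with $(f_i,f_j,f_k)=(\partial_i,\partial_j,\partial_k)\times(u_i,u_j,u_k)$ and $f_l=0$ otherwise. This $\mathbf{f}$ is automatically divergence-free (it is a three-dimensional curl embedded in $\mathbb{R}^n$; no hypothesis on $\divg\mathbf{u}$ is needed for this), and, crucially, its components are themselves entries of $\CURL\mathbf{u}$, so $\left\||x|^{\alpha}\mathbf{f}\right\|_{L^1}\leq C\left\||x|^{\alpha}\CURL\mathbf{u}\right\|_{L^1}$. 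Applying Theorem \ref{ZZB} (resp.\ Theorem \ref{ZZB0}) with $K(x)=x_m/|x|^n$ to each such $\mathbf{f}$ controls the weighted $L^q$-norm of $\mathrm{T}_1(\CURL_{ij}\mathbf{u})$ for every entry and every $m$, and summing over $j$ in the representation formula finishes items (1) and (2); this grouping into triples is also where the hypothesis $n\geq 3$ in those items comes from. You should replace your Leray-decomposition step with this argument.
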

\begin{proof}
By Green's representation formula and the identity \eqref{vectorid}, we obtain
\begin{equation}\label{vector-id2}
\aligned
\mathbf{u}(x)&=\int_{\mathbb{R}^n}\Gamma(x-y)(-\Delta \mathbf{u})(y)dy\\
&=\int_{\mathbb{R}^n}\Gamma(x-y)(-\divg\CURL\mathbf{u}-\nabla(\divg \mathbf{u}))(y)dy\\
&=\int_{\mathbb{R}^n}\nabla_y\Gamma(x-y)\cdot\CURL \mathbf{u}(y)+\nabla_y\Gamma(x-y)\divg \mathbf{u}(y)dy\\
&=\frac{1}{|\mathbb{S}^{n-1}|}\int_{\mathbb{R}^n}\frac{x-y}{|x-y|^n}\cdot\CURL \mathbf{u}(y)+\frac{x-y}{|x-y|^n}\divg \mathbf{u}(y)dy,
\endaligned
\end{equation}
where the dot product between a vector $\mathbf{v}$ and a matrix $\mathbf{A}=(\mathbf{A}_1,\mathbf{A}_2,\cdots,\mathbf{A}_n)^T$ is defined by $\mathbf{v}\cdot \mathbf{A}=(\mathbf{v}\cdot \mathbf{A}_1,\mathbf{v}\cdot \mathbf{A}_2,\cdots,\mathbf{v}\cdot \mathbf{A}_n)$.

If $p>1$, then by Theorem \ref{SW}, we obtain the second inequality.

Next we turn to the case $p=1$ and $\divg \mathbf{u}=0$. For $1\leq i<j<k\leq n$, set $(f_i,f_j,f_k)=(\frac{\partial}{\partial y_i},\frac{\partial}{\partial y_j},\frac{\partial}{\partial y_k})\times (u_i,u_j,u_k)$, $f_l=0$ if $l\neq i,j,k$. Then we have $\divg \mathbf{f}=0$.
Applying Theorem \ref{ZZB} to $\mathbf{f}$, then we obtain
$$
\left\|\frac{1}{|x|^\beta}\int_{\mathbb{R}^n}\frac{x_m-y_m}{|x-y|^n}(\frac{\partial u_i}{\partial y_j}-\frac{\partial u_j}{\partial y_i})(y)dy\right\|_{L^q}\leq C\left\||x|^{\alpha}\mathbf{f}\right\|_{L^1}\leq C\left\||x|^{\alpha}\CURL \mathbf{u}\right\|_{L^1},
$$
for any $1\leq m\leq n$. Then using \eqref{vector-id2} with $\divg \mathbf{u}=0$, we get the first inequality.
\end{proof}

We can give another proof to the second inequality in Theorem \ref{ZZBn}. We first state the following two facts:
\begin{enumerate}
\item For any $\mathbf{u}\in C^\infty_c(\mathbb{R}^n,\mathbb{R}^n)$, it holds that
$$\nabla\mathbf{u}=-\nabla(-\Delta)^{-1}\divg\CURL \mathbf{u}-\nabla(-\Delta)^{-1}\nabla(\divg \mathbf{u}),$$
\item If $1<p<\infty$ and $-\frac{n}{p}<\alpha<\frac{n}{p'}$, then $|x|^{\alpha p}$ is in the class $A_p$ (see \cite[Proposition 1.4.4]{LDY2007}).
\end{enumerate}
Using the above two results and the idea of proof given in \cite{JWX2013}, we can generalize \cite[Lemma 2.4]{JWX2013} to any dimension $n\geq 2$:

If $1<p<\infty$ and $-\frac{n}{p}<\alpha<\frac{n}{p'}$, then for any $\mathbf{u}\in C^\infty_c(\mathbb{R}^n,\mathbb{R}^n)$, we have the following weighted inequality for div-curl-grad operators
\begin{equation}
\left\||x|^{\alpha}\nabla \mathbf{u}\right\|_{L^p}\leq C(\left\||x|^{\alpha}\divg \mathbf{u}\right\|_{L^p}+\left\||x|^{\alpha}\CURL \mathbf{u}\right\|_{L^p}).
\end{equation}
By Caffarelli-Kohn-Nirenberg inequality \eqref{CKNp} and the above inequality, we can also obtain the second conclusion of Theorem \ref{ZZBn}. However, this method is not applicable for the case $p=1$.

\subsection*{Acknowledgements.}
First, I would like to express my gratitude to my supervisor Prof. Xingbin Pan for guidance and constant encouragement. I also would like to thank Dr. Xingfei Xiang for introducing me some inequalities involving $L^1$-norm, Deliang Chen, Yong Zeng and Dr. Huyuan Chen for useful discussions and suggestions. The work was partly supported by the National Natural Science Foundation of China grant no. 11171111 and by Outstanding Doctoral Dissertation Cultivation Plan of Action (PY2015038).

\vspace {0.5cm}
\begin {thebibliography}{DUMA}

\bibitem{BB2002} J. Bourgain, H. Brezis, {\it  On the equation $\divg Y = f$ and application to control
of phases}, J. Amer. Math. Soc. {\bf 16} (2), (2002) 393-426.

\bibitem{BB2004} J. Bourgain, H. Brezis, {\it  New estimates for the Laplacian, the div-curl, and
related Hodge systems}, C. R. Math. Acad. Sci. Paris {\bf 338}, (2004) 539-543.

\bibitem{BB2007} J. Bourgain, H. Brezis, {\it  New estimates for elliptic
equations and Hodge type systems}, J. Eur. Math. Soc.  {\bf 9} (2), (2007)
277-315.

\bibitem{BM2011} P. Bousquet, P. Mironescu,
{\it An elementary proof of an inequality of Maz'ya involving $L^1$ vector fields}, Nonlinear elliptic partial differential equations, 59-63, Contemp. Math. {\bf 540}, Amer. Math. Soc., Providence, RI, 2011.

\bibitem{CKN1984} L. A. Caffarelli, R. Kohn, L. Nirenberg, {\it  First order interpolation inequalities with weights}, Compos. Math. {\bf 53} (3), (1984) 259-275.

\bibitem{DDE2012} F. Demengel, G. Demengel, {\it Functional spaces for the theory of elliptic partial differential equations}, Universitext, Springer, London, 2012, Translated from the 2007 French original by Reinie Ern\'{e}.

\bibitem{DDD2011} P. L. De N\'{a}poli, I. Drelichman, R. G. Dur\'{a}n, {\it On weighted inequalities for fractional integrals of radial functions}, Ill. J. Math. {\bf 55} (2), (2011) 575-587.

\bibitem{GT1998} D. Gilbarg, N. S. Trudinger, {\it  Elliptic Partial Differential Equations of Second
Order}, Classics in Mathematics, Springer-Verlag, Berlin, 2001, Reprint of the 1998 edition.

\bibitem{JWX2013} Q. S. Jiu, Y. Wang, Z. P. Xin, {\it Global well-posedness of the Cauchy problem of two-dimensional compressible Navier-Stokes equations in weighted spaces}, J. Differential Equations {\bf 255} (3), (2013) 351-404.

\bibitem{LS2005} L. Lanzani, E. M. Stein, {\it A note on div curl inequalities}, Math. Res. Lett. {\bf 12} (1), (2005) 57-61.

\bibitem{Lou2014} A. Loulit, {\it Weighted estimates for $L^1$-vector fields}, Proc. Amer. Math. Soc. {\bf 142} (12), (2014) 4171-4179.

\bibitem{LDY2007} S. Z. Lu, Y. Ding, D. Y. Yan, {\it Singular integrals and related topics}, World Scientific, Singapore, 2007.

\bibitem{Mazya2010} V. Maz'ya, {\it  Estimates for differential operators of vector analysis involving $L^1$-norm}, J. Eur. Math. Soc. {\bf 12} (1), (2010) 221-240.

\bibitem{MM2009} I. Mitrea, M. Mitrea, {\it A remark on the regularity of the div-curl system}, Proc. Amer. Math. Soc. {\bf 137} (5), (2009) 1729-1733.

\bibitem{SteinW} E. M. Stein, G. Weiss, {\it Fractional integrals on n-dimensional Euclidean space,} J. Math. Mech. {\bf 7}, (1958) 503-514.

\bibitem{VanSch2010} J. Van Schaftingen, {\it Limiting fractional and Lorentz space estimates of differential forms}, Proc. Amer. Math. Soc. {\bf 138} (1), (2010) 235-240.

\bibitem{XZ2015} X. F. Xiang, Z. B. Zhang, {\it Hardy-type inequalities for vector fields with vanishing tangential components}, Proc. Amer. Math. Soc. {\bf 143} (12), (2015) 5369-5379.

\end{thebibliography}

\end{document}